\documentclass[english]{smfart}
\usepackage[francais,english]{babel}
\usepackage{smfthm}
\usepackage{amssymb}
\usepackage{euscript}
\usepackage[all]{xypic}
\newcommand{\scr}[1]{\EuScript{#1}}
\newcommand{\C}{\mathbb{C}}
\newcommand{\Z}{\mathbb{Z}}
\newcommand{\Q}{\mathbb{Q}}
\renewcommand{\H}{\mathbb{H}}
\newcommand{\OO}{\scr{O}}
\newcommand{\SL}{\mathrm{SL}}
\newcommand{\an}{\mathrm{an}}
\newcommand{\ord}{\mathrm{ord}}
\newcommand{\ip}[1]{\langle #1 \rangle}
\newcommand{\Tate}{\underline{\mathrm{Tate}}}
\renewcommand{\div}{\mathrm{div}}

\author{Nick Ramsey} \address{Department of Mathematics, 
  University of Michigan}
\email{naramsey@umich.edu} 
\title{Geometric and $p$-adic Modular Forms of Half-Integral Weight}

\begin{document}
\frontmatter
\maketitle
\tableofcontents

\section{Introduction}

The aim of this article is to generalize the notion of a classical
(holomorphic) modular form of half-integral weight to rings other than
$\C$.  In particular we define, for any positive integers $N$ and $k$,
the space $M_{k/2}(4N,R)$ of modular forms of weight $k/2$ and level
$4N$ over the ring $R$, for any ring $R$ in which the level $4N$ is
invertible.  The definition is geometric in nature in the sense that
it involves the modular curves $X_1(4N)$, and admits a modification
to a $p$-adic theory by ``deleting the supersingular locus''.  

The author's primary motivation for studying such objects lies in
their relation to (central) special values of modular $L$-functions.
This remarkable relationship, discovered by Waldspurger, expresses
these values (appropriately normalized) in terms of \emph{squares} of
the Fourier coefficients of modular forms of half-integral weight.
Investigating the $p$-adic nature of modular forms of half-integral,
and in particular the nature of $p$-adic families of modular forms of
half-integral weight, represents a first step in one approach to
understanding the $p$-adic variation of (square roots of) these
$L$-values.  Eying the integral weight case, one expects the notion of
overconvergence to play a central role in questions about families of
modular forms of half-integral weight.  This, in turn, is currently
best understood in geometric terms.

In the integral weight case, a geometric theory of modular forms is
achieved by considering sections of tensor powers of the line bundle
$\omega$ which is the push-forward of the sheaf of relative
differentials from the universal elliptic curve over $X_1(N)$
(supposing $N\geq 5$).  Lacking such a ``natural'' bundle to work
with, we resort to an alternative technique which exploits
well-understood modular forms of weight $k/2$ and reduce (by
division) to consideration of certain rational functions on $X_1(4N)$.
This method is reminiscent of the use of the Eisenstein family to
define families of $p$-adic modular forms of integral weight (as in
\cite{coleman} and \cite{colmaz}).

Let $$\theta(\tau) = \sum_{n\in\Z} q^{n^2}, \ \ \ \ \ q = e^{2\pi
  i\tau}$$
denote the usual Jacobi theta function.  It is well-known
that $\theta$ obeys a transformation law with respect to the group
$$\Gamma_0(4) = \left\{\left.\left(\begin{matrix} a & b \\ c &
        d\end{matrix}\right)\in \SL_2(\Z)\right| 4|c \right\},$$
namely
\begin{equation}\label{eqn:thetatrans}
\theta\left(\frac{a\tau+b} {c\tau +d}\right) =
\varepsilon_d^{-1}\left(\frac{c}{d}\right)(c\tau+d)^{1/2}\theta(\tau),\
\mathrm{for\ all}\ \left(\begin{matrix} a & b \\ c &
    d\end{matrix}\right)\in \Gamma_0(4),$$ where $$\epsilon_d =
\left\{ \begin{matrix} 1 & d\equiv 1\pmod{4} \\ i & d \equiv -1
    \pmod{4}\end{matrix}\right.
\end{equation}
and the square root is chosen to
have argument in the interval $[-\pi/2,\pi/2)$.  The Jacobi symbol
here is taken with the conventions of \cite{shimura}.

We will take the $\theta^k$ as the ``well-understood'' forms by which
we divide as indicated above.  Indeed, in section \ref{sec:defs} we
single out a family of divisors $\Sigma_{4N,k}$ on the curves
$X_1(4N)$ with the property that the rational functions $F$ on
$X_1(4N)$ such that $F\theta^k$ is a holomorphic modular forms of
weight $k/2$ are exactly those $F$ for which $(F)\geq -\Sigma_{4N,k}$.
The caveat is that $\Sigma_{4N,k}$ has \emph{non-integral} (rational)
coefficients.  The divisor $\Sigma_{4N,k}$ is supported on the cusps
and makes perfect sense as a relative $\Q$-divisor on $X_1(4N)_R$ over
any base $R$ in which $4N$ is invertible.  We will accordingly
identify $M_{k/2}(4N,R)$ with the $R$-module of rational functions $F$
on $X_1(4N)_R$ with $(F)\geq -\Sigma_{4N,k}$.  In other words, if
$\lfloor\Sigma_{4N,k}\rfloor$ is the relative effective Cartier
divisor obtained by taking the floor of the coefficients of
$\Sigma_{4N,k}$, then $$M_{k/2}(4N,R) = H^0(X_1(4N)_R,\OO(\lfloor
\Sigma_{4N,k} \rfloor)).$$
In what follows, we will adopt the suggestive notation
$$H^0(X_1(4N)_R,\Sigma_{4N,k})$$  for the right-hand side of this 
equation.  

If this definition is to be a useful one, it must enjoy some the host
of properties possessed by the integral weight analog.  To this end, we
will define $q$-expansions, prove a $q$-expansion principle, prove
some basic ``change-of-ring'' compatibilities, and furnish a
\emph{geometric} construction of the Hecke operators.  Because of its
central importance, we briefly expound on this last point.  

Let $X_1(N,m)$ denote the modular curve over $\Z[1/(Nm))]$ (roughly)
classifying elliptic curves with a point of order $N$ and a cyclic
subgroup of order $m$ having trivial intersection with that generated by
the point.  Let
$$\pi_1,\pi_2: X_1(N,m)\longrightarrow X_1(N)$$ denote the
degeneracy maps which forget and quotient by, respectively, the cyclic
subgroup of order $m$.  These pairs of maps form the usual Hecke
correspondences on $X_1(N)$, and the geometric manifestation of the
integral weight Hecke operator $T_m$ (for, say, $m$ prime) is
\begin{eqnarray*}
H^0(X_1(N),\omega^k) & \longrightarrow & H^0(X_1(N),\omega^k) \\ 
f & \longmapsto & \pi_{1*}(\pi_2^*f),
\end{eqnarray*}
where we have exploited the canonical identification $$\pi_1^*\omega
\cong \pi_2^*\omega.$$

A little reflection on the definition of a modular form of
half-integral weight given above leads one to look for a map of the
form
\begin{eqnarray*}
  H^0(X_1(4N),\Sigma_{4N,k}) & \longrightarrow &
  H^0(X_1(4N),\Sigma_{4N,k}) \\ F & \longmapsto &
  \pi_{1*}(\pi_2^*F\cdot \Theta)
\end{eqnarray*}
for some rational function $\Theta$ on $X_1(4N,m)$ satisfying
\begin{equation}\label{eqn:thetacondition}
\div(\Theta)\geq \pi_2^*\Sigma_{4N,k} - \pi_1^*\Sigma_{4N,k}.
\end{equation}
Note that this last divisor is of degree $0$, so if such a $\Theta$
exists, then one has equality above. It then follows that $\Theta$ is
necessarily unique up to a constant factor, and the divisor
$\pi_2^*\Sigma_{4N,k} - \pi_1^*\Sigma_{4N,k}$ actually has integral
coefficients.  The following is proven in section \ref{sec:hecke}, and
involves a slightly refined space of forms $M_{k/2}(4N,\chi,R)$ with
Nebentypus character $\chi:(\Z/4N\Z)^\times\to R^\times$.
\begin{theo}
  Suppose $\gcd(4N,m)=1$.  Then there exists a $\Theta$ satisfying
  (\ref{eqn:thetacondition}) if and only if $m$ is a square.  If
  $m=l^2$ for a prime $l$ not dividing $4N$, the corresponding
  endomorphism $T_{l^2}$ of $M_{k/2}(4N,\chi,R)$ (appropriately
  normalized) has the following effect: if $\sum a_nq^n$ is the
  $q$-expansion of $F$ at $\infty$, then the $q$-expansion of
  $T_{l^2}F$ at $\infty$ is $\sum b_nq^n$ where $$b_n = a_{l^2n} +
  \left(\frac{-1}{l}\right)^{(1-k)/2}l^{(k-1)/2-1}
  \left(\frac{n}{l}\right)\chi(l)a_n + l^{k-2}\chi(l^2)a_{n/l^2}.$$
\end{theo}
This theorem furnishes a geometric explanation of the classical fact
that there are no good (i.e. prime to the level) Hecke operators of
non-square index, and shows that the operators obtained by the
procedure above at square indices are the same as the classical ones
(as in \cite{shimura}).  

Let $K$ be a discretely valued subfield of $\C_p$.  In section
\ref{sec:padic} we define $K$-Banach space $M_{k/2}(4N,K,r)$ of
$p$-adic modular forms of weight $k/2$, level $4N$, and growth
condition $r\in[0,1]$, and prove the following theorem.
\begin{theo}
  Let $p$ be a prime number not dividing $4N$ and let $r$ further
  satisfy $r>p^{-1/p(1+p)}$.  There is a completely continuous
  endomorphism $U_{p^2}$ of $M_{k/2}(4N,K,r)$ having the effect $$\sum
  a_nq^n\longmapsto \sum a_{p^2n}q^n$$
  on $q$-expansions at $\infty$.
  Moreover, the norm of $U_{p^2}$ is at most $p^2$.
\end{theo}

The definition of the space $M_{k/2}(4N,K,r)$ is a straightforward
modification of that of $M_{k/2}(4N,R)$ achieved by deleting part of
the supersingular locus.  In particular, it still relies on properties
of the forms $\theta^k$ (equivalently, the divisors $\Sigma_{4N,k}$).
We note that it is perhaps more natural in this $p$-adic context to
use Eisenstein series of half-integral weight in place of $\theta$,
since much of their divisors lie \emph{outside of the range of consideration}.
This point of view is adopted in a forthcoming work investigating
$p$-adic families of modular forms of half-integral weight
(see \cite{hieigencurve} and \cite{rigidshimura}) .\newline

\noindent{\bf Acknowledgments}\newline

This work was done while the author was a graduate student Harvard
University.  He extends his  thanks to his advisor Barry Mazur
for suggesting a problem which led to this work and for being helpful
and encouraging throughout.  He would also like to thank the referee
for pointing out some typos, as well as providing some helpful
suggestions about the level of detail.

\section{Notation and Conventions}\label{sec:notconv}

For an integer $N\geq 4$ and a ring $R$ in which $N$ is
invertible, $Y_1(N)$ will denote the fine moduli scheme classifying
pairs $(E/S,P)$ where $E$ is an elliptic curve over an $R$-algebra $S$
and $P$ is a point of order $N$ (as in \cite{katzmazur}).  For such
$N$, $X_1(N)_R$ will denote the compactified moduli scheme, which is a
fine moduli scheme for the \emph{generalized} $\Gamma_1(N)$ problem if
$N\geq 5$, and coarse moduli scheme if $N=4$.  We will have no
occasion to make use of generalized elliptic curves in what follows.  

Choosing another positive integer $m$ which is invertible in $R$, we
may also form the moduli scheme $Y_1(N,m)_R$ classifying triples
$(E/S,P,C)$ consisting of an elliptic curve $E$ over an $R$-algebra
$S$, a point $P$ of order $N$, and a cyclic subgroup $C$ of order $m$
not meeting that generated by $P$, as well as its compactification
$X_1(N,m)_R$.  All of these moduli schemes are smooth over $R$.

If $K$ is the complex field $\C$ or a $p$-adic field, we will denote
by $X_1(N)_K^\an$ (and similarly $X_1(N,m)_K^\an$, etc.) the
associated complex-analytic or rigid-analytic space, respectively.
Over $\C$, we choose to uniformize $Y_1(N)$ via the map
\begin{eqnarray*}
  \Gamma_1(N)\backslash\H & \stackrel{\sim}{\longrightarrow} &
  Y_1(N)_\C^\an \\ \tau & \longmapsto & \left(E_\tau,\frac{1}{N}\right)
\end{eqnarray*}
where $\H$ denotes the complex upper half-plane and $E_\tau$ denote
the elliptic curve $\C/\ip{1,\tau}$ over $\C$.  Similarly, we choose to
uniformize $Y_1(N,m)$ via the map
\begin{eqnarray*}
  (\Gamma_1(N)\cap \Gamma^0(m))\backslash\H &
  \stackrel{\sim}{\longrightarrow} & 
  Y_1(N,m)_\C^\an \\ \tau & \longmapsto &
  \left(E_\tau,\frac{1}{N},\left\langle
  \frac{\tau}{m}\right\rangle\right).
\end{eqnarray*}
If $\Gamma\subseteq\SL_2(\Z)$ is any subgroup, we will also use the
notation $X(\Gamma)_\C^\an$ for the compactification of the quotient
$\Gamma\backslash\H$, ignoring any algebraic model of this Riemann
surface (i.e. this is not intended to denote the analytification of
anything in this case).

We will also need to consider the moduli spaces $X_1(N,p^n)_R$ in the
case that $p$ is \emph{not} invertible in $R$ (but $N$ is).  By such a
space we will always mean the Katz-Mazur model (see \cite{katzmazur}).
In particular, $X_1(N,p^n)_R$ is not smooth over $R$ and will even
have non-reduced fibers when $n>1$.  We note for later use that
$X_1(N,p^n)_R$ is Cohen-Macaulay over $R$ as its singularities are all
local complete intersections.

We will denote the Tate elliptic curve over $\Z((q))$ by $\Tate(q)$
(see \cite{katz}).  Our conventions concerning the Tate curve differ
from the standard ones as follows.  In the presence of, for example,
level $N$ structure, previous authors (e.g.,\ \cite{katz}) have
preferred to consider the curve $\Tate(q^N)$ over the base $\Z((q))$.
Points of order $N$ on this curve are used to characterize the
behavior of a modular form at the cusps, and are all defined over the
fixed ring $\Z((q))[\zeta_N]$ (where $\zeta_N$ is some primitive
$N^{\small\mathrm{th}}$ root of $1$).  We prefer to fix the curve
$\Tate(q)$ and instead consider \emph{extensions} of the base.  Thus,
in the presence of level $N$ structure, we introduce the formal
variable $q_N$, and \emph{define} $q=q_N^N$.  Then the curve
$\Tate(q)$ is defined over the sub-ring $\Z((q))$ of $\Z((q_N))$ and
all of its $N$-torsion is defined over the ring $\Z((q_N))[\zeta_N]$.
To be precise, the $N$-torsion is given by $$\zeta_N^iq_N^j,\ \ 0\leq
i,j\leq N-1.$$  

The reason for this formal relabeling is that, with these
conventions, the $q$-expansions of modular forms look like they do
complex-analytically if we identify $q_N$ with $e^{2\pi i\tau/N}$.  In
particular, the ``$q$-order'' of vanishing (i.e. the smallest,
generally fractional, power of $q$ appearing in the expansion) at a
cusp does not change when one pulls back through a degeneracy map, even
in the presence of ramification.

For a prime $l$ and an $l^{\small\rm th}$ root of unity $\zeta$ we
will denote the associated quadratic Gauss sum by
$$\mathfrak{g}_l(\zeta) :=
\sum_{a=1}^{l-1}\left(\frac{a}{l}\right)\zeta^a.$$ 

\section{Definitions}\label{sec:defs}

Given that $\theta$ is non-vanishing in $\H$, we must examine the
behavior of $\theta$ at the cusps of $X_1(4)_\C^\an$ in order to
determine the divisor $\Sigma_{4N,k}$ mentioned in the introduction.
There are three such cusps, and using standard formulas for
transforming ``theta functions with characteristics'' one finds that the
$q$-expansions of $\theta$ at these cusps are as follows (defined up
to a constant involving powers of $2$ and roots of unity, except in
the first case).
\begin{equation}\label{tab:table1}
\begin{tabular}{l|l}
  cusp on $X_1(4)^\mathrm{an}$ & $q$-expansion   \\\hline 
  $\infty$ & $\theta_1(q):=\sum_{n\in\Z}q^{n^2}$ \\
  $1/2$ &  $\theta_3(q):=q_4\sum_{n\in\Z}q^{n^2+n}$ \\
  $0$ &  $\theta_2(q):=\sum_{n\in\Z} q_{4}^{n^2}$ 
\end{tabular}
\end{equation}
Recall that in the complex-analytic setting, $q_h$ is shorthand for
 $e^{2\pi i\tau/h}$.  
 
 If $F$ is a rational function on $X_1(4N)_\C^\an$, it follows that
 $F\cdot\theta^k$ is holomorphic if and only if $(F)\geq
 -\Sigma_{4N,k}$, where 
\begin{equation}\label{eqn:sigmadef}
\Sigma_{4N,k} = \sum_{c\sim
   1/2}\frac{kw_c}{4}\cdot c,
\end{equation}
the sum is taken over all cusps $c$ on $X_1(4N)_\C^\an$ mapping to
$1/2$ on $X_1(4)_\C^\an$ under the degeneracy map which multiplies the
point of order $4N$ by $N$, and $w_c$ denotes the width of the cusp
$c$ (which coincides with the ramification index of this map at $c$ since
$1/2\in X_1(4)_\C^\an$ has width $1$).  

To determine the appropriate replacement for $\Sigma_{4N,k}$ on the
algebraic curve $X_1(4N)_R$ ($R$ a $\Z[1/(4N)]$-algebra), we consider
the form $\theta^4$.  By the transformation formula
(\ref{eqn:thetatrans}), this is a modular form of weight $2$ for
$\Gamma_1(4)$.  By GAGA and the $q$-expansion principle, $\theta^4$
furnishes a section of $\omega^2$ on the curve $X_1(4)_{\Z[1/2]}$.
Thus we may evaluate the corresponding rule on pairs $(\Tate(q),P)$
where $P$ runs through the points of order $4$ on $\Tate(q)$ (recall
the conventions Section \ref{sec:notconv}).  There are six such pairs
up to isomorphism, and the values are as follows,
\begin{equation}\label{tab:table2}
\begin{tabular}{l|c|c}
 point on $\Tate(q)$ &cusp on $X_1(4)_\C^\an$ &   $q$-expansion
 \\\hline   
 $\zeta_4$ & $\infty$ &   $\left(\sum_{n\in\Z}q^{n^2}\right)^4$ \\
 $\zeta_4 q_2$ & $1/2$ &   $q\left(\sum_{n\in\Z}q^{n^2+n}\right)^4$ \\
 $q_4$ & $0$ &   $-\frac{1}{4}\left(\sum_{n\in\Z} q_{4}^{n^2}\right)^4$ \\
 $\zeta_4q_4$ & $0$ &
 $-\frac{1}{4}\left(\sum_{n\in\Z}(\zeta_4q_4)^{n^2}\right)^4$ \\
 $\zeta_4^2q_4=-q_4$ & $0$ &
 $-\frac{1}{4}\left(\sum_{n\in\Z}(-q)^{n^2}\right)^4$  
 \\   $\zeta_4^3q_4=-\zeta_4q_4$ & $0$ &
 $-\frac{1}{4}\left(\sum_{n\in\Z}(\zeta_4^3q_4)^{n^2}\right)^4$  
\end{tabular}
\end{equation}
where we have filled in the middle column by comparison with table
(\ref{tab:table1}).  

It is now clear from the table that the cusp associated to the pair
$(\Tate(q),\zeta_4q_2)$ is the correct replacement for the cusp $1/2$
in general.  To simplify notation, we will simply denote the cusp on
$X_1(4)_R$ corresponding to this pair by the symbol $1/2$, and
accordingly \emph{define} a divisor $\Sigma_{4N,k}$ on the curve
$X_1(4N)_R$ by the formula (\ref{eqn:sigmadef}) for any ring $R$ in
which $4N$ is invertible.  

Now that we have a good version of $\Sigma_{4N,k}$ on $X_1(4N)_R$, we
enshrine the identification of the Introduction in a
\emph{definition}.
\begin{defi}
  Let $k$ be an odd positive integer, let $N$ be any positive integer,
  and let $R$ be a ring in which $4N$ is invertible.  A modular form
  of weight $k/2$ and level $4N$ is an element of the $R$-module
  $$M_{k/2}(4N,R) := H^0(X_1(4N)_R,\Sigma_{4N,k}).$$
\end{defi}
We recall from the Introduction that this notation is shorthand for
the collection of rational functions $F$ on $X_1(4N)_R$ with $(F)\geq
-\Sigma_{4N,k}$.  In other words, this is the space of section of the
sheaf $\OO(\lfloor \Sigma_{4N,k}\rfloor)$ associated to the relative
effective Cartier divisor $\lfloor\Sigma_{4N,k}\rfloor$ on
$X_1(4N)_R$.  The reason for carrying around the $\Q$-divisor
$\Sigma_{4N,k}$ instead of uniformly resorting to its floor
$\lfloor\Sigma_{4N,k}\rfloor$ (or equivalently the associated sheaf)
is that the latter do not behave well with respect to degeneracy maps
and the former does.  For example, if $N|M$ and $$\pi:X_1(4M)\to X_1(4N)$$
denotes the map which multiplies the point by $M/N$, then
$$\pi^*\Sigma_{4N,k} = \Sigma_{4M,k}.$$
The precise behavior of
$\Sigma_{4N,k}$ under various degeneracy maps will be of critical
importance in the sequel.

For $d\in (\Z/(4N)\Z)^\times$, let $\ip{d}$ denote the diamond
automorphism of $X_1(4N)$ corresponding to $d$.  Note that
$$\ip{d}^*\Sigma_{4N,k} = \Sigma_{4N,k},\ \ \mathrm{for\ all}\ \ d\in
(\Z/(4N)\Z)^\times.$$  It follows that the group $(\Z/(4N)\Z)^\times$
acts on the space $M_{k/2}(4N,R)$.  For a character
$$\chi:(\Z/(4N)\Z)^\times\to R^\times,$$ we will denote the
$\chi$-isotypic component of $M_{k/2}(4N,R)$ by $M_{k/2}(4N,\chi,R)$,
and refer to such forms as having ``nebentypus $\chi$''.

\section{Properties}

As in the integral weight case, it will be useful to reinterpret the
elements of $M_{k/2}(4N,R)$ as ``rules'' which take test objects as
input and give some output subject to a few restraints.  In
particular, we will take as test objects pairs $(E/S,P)$ where $E$ is
an elliptic curve over an $R$-algebra $S$ and $P$ is a point of order
$4N$ on $E$.  An element $F$ of $M_{k/2}(4N,R)$ can then be identified
with a rule (also called $F$) which takes as input a test object
$(E/S,P)$ and outputs an element of $S$ such that
\begin{itemize}
\item if the test objects $(E/S,P)$ and $(E'/S,P')$ are isomorphic
  over $S$, then $F(E,P) = F(E',P')$,
\item if $\phi:S\to S'$ is any map of $R$-algebras and $(E/S,P)$ is a
  test object over $S$, then $$F((E,P)\times_S S') = \phi(F(E,P)),$$
  and 
\item if $P$ is a point of order $4N$ on $\Tate(q)$, then
  $$F(\Tate(q),P)\in q_{4N}^{-e}\Z[[q_{4N}]]$$ where $$e =
  \frac{4N}{w_c}\ord_c\Sigma_{4N,k} = \left\{\begin{matrix} Nk & c\sim
  1/2 \\ 0 & c\nsim 1/2\end{matrix}\right.$$
\end{itemize}
and $c$ is the cusp associated to $(\Tate(q),P)$.

With this in mind, we can define define the $q$-expansions of a
modular form of half-integral weight by evaluating the corresponding
rule on pairs $(\Tate(q),P)$ for various points $P$ of order $4N$.
By the definition of the space $M_{k/2}(4N,R)$, these $q$-expansions
will not agree with the classical ones, and will in fact be off
exactly by a factor of the corresponding $q$-expansion of $\theta^k$,
and we must adjust accordingly.

To be precise, let $P$ be a point of order $4$ on $\Tate(q)$.  Then,
up to isomorphism, $P$ is one of the points listed in the first column
of Table \ref{tab:table2}.  Up to roots of unity, the corresponding
$q$-expansions of $\theta$ are as follows (see Table \ref{tab:table1}).
\begin{equation}\label{tab:table3}
  \begin{tabular}{l|c}
    $P$ & $q$-expansion of $\theta$ \\ \hline
    $\zeta_4$ & $\sum_{n\in\Z} q^{n^2}$ \\ 
    $\zeta_4q_2$ & $q_4\sum_{n\in\Z} q^{n^2+n}$ \\
    $\zeta_4^kq_4,\ 0\leq k\leq 3$ &
    $\frac{1}{1+\zeta_4^k}\sum_{n\in\Z}\zeta_4^{kn^2}q_4^{n^2}$ 
  \end{tabular}
\end{equation}

\begin{defi}
  Let $F\in M_{k/2}(4N,R)$ and let $P$ be a point of order $4N$ on
  $\Tate(q)$.  Let $\theta_P$ be the $q$-expansion of $\theta$ in the
  above table corresponding to the point $NP$ of order $4$.  The
  $q$-expansion of $F$ at $(\Tate(q),P)$ is
 $$F(\Tate(q),P)\theta_P^k \in \Z((q_{4N}))\otimes
 R[\zeta_{4N}].$$  
\end{defi}
We remark in particular that the definition of $\Sigma_{4N,k}$ exactly
suffices to ensure that all of the $q$-expansions of an $F\in
M_{k/2}(4N,R)$ are in fact in the subring $$\Z[[q_{4N}]]\otimes
R[\zeta_{4N}].$$  That is, such an $F$ is ``holomorphic at the
cusps.''  We will often fix a root of unity $\zeta_{4N}$, and refer to
the $q$-expansion at $(\Tate(q),\zeta_{4N})$ as the ``$q$-expansion at
$\infty$.''  The reason is that if $R=\C$ and we have fixed
$\zeta_{4N}=e^{2\pi i/N}$, then under the uniformizations detailed in
Section \ref{sec:notconv} we recover the classical $q$-expansion at
``$i\infty$.''  

Half-integral weight modular forms enjoy a $q$-expansion principle
much like the integral-weight version.  As in that case, one must
first define modular forms over \emph{modules} as opposed to
\emph{rings}.  For a $\Z[1/(4N)]$-module $K$, one defines the space of
modular forms of weight $k/2$ and level $4N$ with coefficients in $K$
as the collection of rules which associate to each test datum $(E,P)$
over a $\Z[1/(4N)]$-algebra $R$, an element of $R\otimes_{\Z[1/(4N)]}
K$, subject to the usual compatibilities and conditions on the Tate
curve.  Note that, in the case that $K$ is in fact a ring, the space
so defined in canonically isomorphic to the space we have already
defined.  However, in contrast to the case of rings, the
$q$-expansions of elements of $M_{k/2}(4N,K)$ for a
$\Z[1/(4N)]$-module $K$, lie in the ring $$\Z[[q_{4N}]]\otimes_\Z
\Z[1/(4N),\zeta_{4N}]\otimes_{\Z[1/(4N)]} K,$$
In particular, these
$q$-expansion \emph{do not} agree with the ones defined for rings
(even when $K$ is a ring), though one can still deduce useful
information about the ``ring'' $q$-expansions from the ``module''
$q$-expansions.  At any rate, it is in the context of the ``module''
$q$-expansion that the $q$-expansion principle is most naturally stated.
\begin{theo}
  Let $K\subset L$ be an inclusion of $\Z[1/(4N)]$-modules and let
  $F\in M_{k/2}(4N,L)$.  Let $P$ be a points of order $4N$ on
  $\Tate(q)$ and suppose that the $q$-expansion of $F$ at
  $(\Tate(q),P)$ has coefficients in
  $\Z[1/(4N),\zeta_{4N}]\otimes_{\Z[1/(4N)]} K$.  Then $F\in
  M_{k/2}(4N,K)$.  
\end{theo}
\begin{proof}
  Referring to Table \ref{tab:table3}, we see that each of $\theta_P$
  and $\theta_P^{-1}$ has coefficients in $\Z[1/(4N),\zeta_{4N}]$.  It
  follows that the $q$-expansion of $F$ at $(\Tate(q),P)$ has
  coefficients in $\Z[1/(4N),\zeta_{4N}]\otimes_{\Z[1/(4N)]} K$ if and
  only if $F(\Tate(q),P)$ does.  If the latter is true, then $F\in
  M_{k/2}(4N,K)$ by the ordinary $q$-expansion principle in weight $0$
  (the poles are of no consequence).  
\end{proof}

We wrap up this section with a basic result about changes-of-ring.
Any map $R\to S$ of $\Z[1/(4N)]$-algebras induces a map
$$M_{k/2}(4N,R)\otimes_R S \longrightarrow M_{k/2}(4N,S).$$  We would
like conditions under which this map is an isomorphism.  As these are
sections of the sheaf $\OO(\lfloor\Sigma_{4N,k}\rfloor)$, it suffices
by standard base-changing results to see that
$$H^1(X_1(4N)_R,\OO(\lfloor\Sigma_{4N,k}\rfloor) = 0.$$
\begin{theo}
  This vanishing of $H^1$ holds whenever $k\geq 5$.
\end{theo}
\begin{proof}
  By Serre duality it suffices to show that
  $$\deg(\lfloor\Sigma_{4N,k}\rfloor) >\deg(\Omega^1_{X_1(4N)_R}).$$
  Note that the form $\theta^4$ furnishes a section of $\omega^2$ on
  $X_1(4N)_R$ with divisor $\Sigma_{4N,4}$.  Also, by the
  Kodaira-Spencer isomorphism, $$\Omega^1_{X_1(4N)}\cong
  \omega^2(-C)$$ where $C$ is the divisor of cusps.  Thus
  $$\deg(\Omega^1_{X_1(4N)}) = \deg(\omega^2)- |C| <
  \deg(\Sigma_{4N,4})\leq \deg(\lfloor \Sigma_{4N,k}\rfloor)$$
  whenever $k\geq 4$.
\end{proof}
\begin{rema}
  In weights $1/2$ and $3/2$ this inequality of degrees does not hold
  in general, the first counterexamples occurring at levels $20$ and
  $68$, respectively.  The author does not know if base change holds
  in these cases.
\end{rema}

\section{The Modular Units $\Theta_m$}\label{sec:thetam}

Let $m$ be an odd positive integer.  In this section we introduce a
family of rational functions on $X_1(4m)$ whose divisors are supposed
on the cusps.  These functions will later be used to construct the
Hecke operators.

Let $\Theta_m$ be the holomorphic function on
$\H$ defined by
$$\Theta_m(\tau) = \frac{\theta(\tau/m)}{\theta(\tau)}.$$
Note that,
for $$\gamma=\left(\begin{matrix} a & b \\ c & d\end{matrix}\right)\in
\Gamma_1(4)\cap \Gamma^0(m),$$
we have $$\Theta_m|\gamma =
\left(\frac{m}{d}\right) \Theta_m.$$
Thus $\Theta_m$ furnishes a
holomorphic function on $(\Gamma_1(4)\cap \Gamma^1(m))\backslash\H$
which descends through the natural map 
\begin{equation}\label{eqn:natmap}
(\Gamma_1(4)\cap
\Gamma^1(m))\backslash\H\longrightarrow (\Gamma_1(4)\cap
\Gamma^0(m))\backslash\H 
\end{equation}
exactly when $m$ is a square.  Let $d$ denote the extension of this
map to $$X(\Gamma_1(4)\cap \Gamma^1(m))_\C^\an \longrightarrow
X_1(\Gamma_1(4)\cap \Gamma^0(m))_\C^\an = X_1(4,m)_\C^\an.$$

For $N\geq 1$ we define two maps $$\pi_i:Y_1(4N,m)\longrightarrow
Y_1(4N)$$ by the following two transformations of moduli functors
\begin{eqnarray*}
  \pi_1 : (E,P,C) &\longmapsto& (E,P) \\
  \pi_2 : (E,P,C) &\longmapsto& (E/C,P/C).
\end{eqnarray*}
We remark that in the complex-analytic uniformizations of these curves
given in Section \ref{sec:notconv}, these maps are given by
$\pi_1(\tau) = \tau$ and $\pi_2(\tau) = \tau/m$.  

\begin{prop}\label{divtheta}
  The divisor of $\Theta_m$ as a meromorphic function
  on\\$X(\Gamma_1(4)\cap\Gamma^1(m))_\C^\an$ is $$\div(\Theta_m) = 
  (\pi_2\circ d)^*\Sigma_{4,1} - (\pi_1\circ d)^*\Sigma_{4,1}$$
\end{prop}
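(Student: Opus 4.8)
The plan is to compute the divisor of $\Theta_m$ cusp-by-cusp on $X(\Gamma_1(4)\cap\Gamma^1(m))_\C^\an$ by working out the $q$-expansions of $\theta(\tau/m)$ and $\theta(\tau)$ at each cusp, and then to recognize the resulting $\Q$-divisor as the claimed pullback of $\Sigma_{4,1}$. Since $\theta$ is non-vanishing on $\H$, so is $\Theta_m$, and hence $\div(\Theta_m)$ is supported entirely on the cusps; this reduces the problem to a local computation of orders of vanishing in the relevant uniformizing parameters. First I would recall (from Table \ref{tab:table1}, extended via the transformation formula \eqref{eqn:thetatrans}) that the $q$-order of $\theta$ at a cusp of $X_1(4)_\C^\an$ is $1/4$ at the cusp $1/2$ and $0$ at the cusps $\infty$ and $0$; equivalently, as a section-type object, $\theta$ has divisor $\Sigma_{4,1}$ in the normalization where $\theta\cdot\theta^{-1}$ is holomorphic, i.e.\ the divisor condition defining $M_{1/2}(4,\C)$ is $(F)\geq-\Sigma_{4,1}$.

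The key step is bookkeeping of widths. A cusp $c$ of $X(\Gamma_1(4)\cap\Gamma^1(m))_\C^\an$ maps under $\pi_1\circ d$ to a cusp $c_1$ of $X_1(4)_\C^\an$ with some ramification index $e_1 = w_c/w_{c_1}$, and under $\pi_2\circ d$ — which in the upper-half-plane picture is $\tau\mapsto\tau/m$ — to a cusp $c_2$ with ramification index $e_2 = w_c/w_{c_2}$. In the local parameter $q_{w_c}$ at $c$, the function $\theta(\tau)$ pulled back along $\pi_1\circ d$ has $q$-order $e_1\cdot\ord_{c_1}(\theta) = (w_c/w_{c_1})\cdot\ord_{c_1}(\theta)$, which is exactly $\ord_c\big((\pi_1\circ d)^*\Sigma_{4,1}\big)$ once one accounts for the fact that $\Sigma_{4,1}$ has coefficient $\ord_{c_1}(\theta)$ at $c_1$ and pullback of $\Q$-divisors multiplies by ramification index; similarly $\theta(\tau/m)$ along $\pi_2\circ d$ contributes $\ord_c\big((\pi_2\circ d)^*\Sigma_{4,1}\big)$. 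Here it is crucial, as emphasized in Section \ref{sec:notconv}, that passing to $\tau/m$ does not change the fractional $q$-order of vanishing even in the presence of ramification, precisely because we normalize $q$-expansions using $q_h = e^{2\pi i\tau/h}$ with $h$ the width. Subtracting the two gives $\div(\Theta_m) = (\pi_2\circ d)^*\Sigma_{4,1} - (\pi_1\circ d)^*\Sigma_{4,1}$ at every cusp.

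The main obstacle I anticipate is the combinatorics of the cusps of $\Gamma_1(4)\cap\Gamma^1(m)$ and their images and widths under the two degeneracy maps: one must verify that $\theta(\tau/m)$ really has the predicted order at every cusp, which requires applying the theta transformation law \eqref{eqn:thetatrans} to the matrix conjugating a given cusp to $\infty$ and tracking the resulting quadratic-residue and $\varepsilon_d$ factors to confirm they contribute no zeros or poles (only the well-understood constants), and that the cusp $1/2$ of $X_1(4)$ used to define $\Sigma_{4,1}$ in the algebraic model matches the analytic cusp $1/2$ under the identification of Section \ref{sec:defs}. Once the local orders are matched everywhere, no global argument is needed: two $\Q$-divisors on a curve that agree at every point are equal, so the proposition follows. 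A consistency check is that $\deg\big((\pi_2\circ d)^*\Sigma_{4,1}\big) = \deg\big((\pi_1\circ d)^*\Sigma_{4,1}\big) = \deg(d)\cdot\deg(\pi_i)\cdot\deg(\Sigma_{4,1})$ since all the maps involved are finite of the appropriate degrees and pullback of divisors preserves degree up to the degree of the map, confirming that $\div(\Theta_m)$ has degree $0$ as it must.
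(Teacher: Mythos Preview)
Your approach is correct in outline but takes a more laborious route than the paper's, which in fact anticipates exactly this strategy: the paper remarks that one is tempted to argue from the ``formula'' $\Theta_m = (\pi_2\circ d)^*\theta/(\pi_1\circ d)^*\theta$, but that $\theta$ has not been realized as a section of any bundle on $X_1(4)$, so such a pullback is informal. Your statement that ``$\theta(\tau)$ pulled back along $\pi_1\circ d$ has $q$-order $e_1\cdot\ord_{c_1}(\theta)$'' is precisely this informal step, and making it rigorous requires the cusp-by-cusp and automorphy-factor bookkeeping you flag as the main obstacle but do not carry out. The paper sidesteps all of it with one trick: pass to $\theta^4 \in H^0(X_1(4)_\C^\an,\omega^2)$, so that $\Theta_m^4 = (\pi_2\circ d)^*\theta^4 \otimes ((\pi_1\circ d)^*\theta^4)^{-1}$ under $\OO\cong\omega^2\otimes(\omega^2)^{-1}$ is a rigorous identity of sections, whence $4\,\div(\Theta_m) = (\pi_2\circ d)^*\div_{\omega^2}(\theta^4) - (\pi_1\circ d)^*\div_{\omega^2}(\theta^4)$; the only remaining input is $\div_{\omega^2}(\theta^4)=4\Sigma_{4,1}$, read off from the three $q$-expansions on $X_1(4)$ itself, after which one divides by $4$. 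This buys functoriality of divisors under pullback for free and never enumerates the cusps of $\Gamma_1(4)\cap\Gamma^1(m)$.
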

\begin{proof}
  It is tempting to deduce this directly from Table (\ref{tab:table1})
  and the ``formula'' $\Theta_m = (\pi_2\circ d)^*\theta/(\pi_1\circ
  d)^*\theta$.  The problem is that $\theta$ has not been realized as
  the section of any bundle.  One way around this is to raise both
  sides to the fourth power and use the fact that $$\theta^4\in
  H^0(X_1(4)^\an_\C, \omega^2)=M_2(\Gamma_1(4),\C).$$
  We have
  $$\Theta_m^4(\tau) = \frac{\theta(\tau/m)^4} {\theta(\tau)^4},$$
  so
  that $$\Theta_m^4 = (\pi_2\circ d)^*\theta^4\otimes((\pi_1\circ
  d)^*\theta^4)^{-1}$$ 
  where we have used the canonical identification $$\OO =
  \omega^2\otimes (\omega^2)^{-1}.$$  It follows that
  \begin{eqnarray*}
    4\div_{\OO}\Theta_m &=& \div_{\OO}\Theta_m^4 \\ &=&
    \div_{\omega^2}(\pi_2\circ d)^*\theta^4 -
    \div_{\omega^2}(\pi_1\circ d)^*\theta^4 \\
    &= & (\pi_2\circ d)^*\div_{\omega^2}\theta^4 - (\pi_1\circ d)
    ^*\div_{\omega^2}\theta^4.
  \end{eqnarray*}
The desired result will then follow from the fact that
$\div_{\omega^2}\theta^4= 4\Sigma_{4,1}$ upon division by $4$.  This
  fact is easily read off from  Table (\ref{tab:table2}).\end{proof}

Since $\gcd(4,m)=1$, the groups $\Gamma_1(4)\cap \Gamma^1(m)$ and
$\Gamma_1(4m)$ are conjugate, and the curves $X(\Gamma_1(4)\cap
\Gamma^1(m))^\an_\C$ and  $X_1(4m)^\an_\C$ are isomorphic.  One way to
realize this isomorphism explicitly is via the moduli interpretation of
the latter curve and the map 
\begin{equation}\label{eqn:isom}
X(\Gamma_1(4)\cap
\Gamma^1(m))^\an_\C\longrightarrow X_1(4m)^\an_\C
\end{equation}
given (on noncuspidal points) by $$\tau\longmapsto (E_\tau,P)$$
where
$P$ is the unique point on $E_\tau$ with $mP = 1/4$ and $4P = \tau/m$.
Translating the natural map $d$ into these terms one arrives at the
map
$$d':X_1(4m)^\an_\C\to X_1(4,m)^\an_\C$$ given on noncuspidal points by
$$(E,P)\longmapsto (E,mP,\ip{4P}).$$
We denote the pullback of $\Theta_m$ through the (inverse of) the
isomorphism (\ref{eqn:isom}) by the same name.  It's divisor is then
$$\div(\Theta_m) = (\pi_2\circ d')^*\Sigma_{4,1} - (\pi_1\circ
d')^*\Sigma_{4,1}.$$

Recall that the divisor $\Sigma_{4N,k}$ is defined more generally on
the algebraic curve $X_1(4N)_R$ for any $R$ in which $4N$ is
invertible.  Applying GAGA to $\Theta_m$ thought of as a section of
$$\OO_{X_1(4m)_\C^\an}((\pi_2\circ d')^*\Sigma_{4,1} - (\pi_1\circ
d')^*\Sigma_{4,1}),$$
we see that $\Theta_m$ comes from a section
(also denoted $\Theta_m$) of the corresponding sheaf on the
\emph{algebraic} curve $X_1(4m)_\C$.  Repeating this argument with
$\Theta_m^{-1}$ as a section of
$$\OO_{X_1(4m)_\C^\an}((\pi_1\circ d')^*\Sigma_{4,1} - (\pi_2\circ
d')^*\Sigma_{4,1})$$
furnishes the opposite inequality of the divisor
of $\Theta_m$ and shows that $\Theta_m$ is a rational function on the
algebraic curve $X_1(4m)_\C$ with
\begin{equation}\label{eqn:thetadiv}
\div(\Theta_m) = (\pi_2\circ d')^*\Sigma_{4,1} - (\pi_1\circ
d')^*\Sigma_{4,1}.
\end{equation}
Finally, we note that, as in the analytic case, $\Theta_m$ descends
through the map $$d':X_1(4m)\longrightarrow X_1(4,m)$$
(given by the
same transformation of moduli functors as above) exactly when $m$ is a
square.  To see this, note that the group $$(\Z/4m\Z)^\times \cong
(\Z/4\Z)^\times \times (\Z/m\Z)^\times$$
acts on the curve
$X_1(4m)_\C$ in the usual way by scaling the point of order $4m$.  One
sees by passage to the analytic world that the action of
$(\Z/m\Z)^\times$ on $\Theta_m$ by pullback is again given by the
character $(m/\cdot)$.

In the next section, we will need a few $q$-expansions of $\Theta_m$
in terms of Tate curves.  To compute these expansions, one must
translate this data into complex-analytic data and compute with
standard transformation formulas for $\theta$.  To do this, we proceed
as follows.  Let $\scr{R}$ denote the subring $\C((q_{4m})))$
consisting of $q$-expansions of holomorphic functions in the upper
half-plane $\H$ which are invariant under translation by $4m$, and for
$\tau\in\H$ let $\phi_\tau$ denote the map
\begin{eqnarray*}
  \scr{R} & \longrightarrow & \C \\
  \sum a_nq_{4m}^n & \longmapsto & \sum a_ne^{\frac{2\pi i n\tau}{4m}}.
\end{eqnarray*}

For any point $P$ of order $4m$ on $\Tate(q)$, the pair $(\Tate(q),P)$
is defined over $\Z[1/(4m),\zeta_4,\zeta_m]((q_{4m}))$, and if we pick
an embedding of the coefficient ring
$$\iota:\Z[1/(4m),\zeta_4,\zeta_m]\hookrightarrow \C$$
and base change
$(\Tate(q),P)$ to $\C((q_{4m}))$ accordingly, the result is actually
defined over $\scr{R}$ (simply look at the defining equations of
$\Tate(q)$). Moreover, if we then base change to $\C$ via the map
$\phi_\tau$, we arrive at a pair $(E_\tau,P')$ for some point $P'$ of
order $4m$ on $E_\tau$.  Thus
$$\phi_\tau(\iota(\Theta_m(\Tate(q),P))) = \Theta_m(E_\tau,P') =
\Theta_m(\gamma\tau)$$ where $$\gamma = \left(\begin{matrix} a
    & b \\ c & d\end{matrix}\right)\in\SL_2(\Z)$$ is chosen so that
the isomorphism 
\begin{eqnarray*}
  E_\tau & \stackrel{\sim}{\longrightarrow} & E_{\gamma\tau} \\
  z & \longmapsto & \frac{z}{c\tau+d}
\end{eqnarray*}
carries the point $P'$ to the unique point $P''$ on $E_{\gamma\tau}$
such that $4P'' = \gamma\tau/m$ and $mP'' = 1/4$.  Now
$\Theta_m(\gamma\tau)$ can be worked out explicitly from the
definition of $\Theta_m$, and the desired expansion is easily read
off.

We illustrate this technique by computing one of these $q$-expansions.
Fix a primitive $4^{\small\mathrm{th}}$ root of $1$, $\zeta_4$.  If
$\zeta_m$ is \emph{any} $m^{\small\mathrm{th}}$ root of $1$ (not
necessarily primitive), and $P$ is the unique point of order $4m$ with
$4P= \zeta_m q_m$ and $mP = \zeta_4$, then we claim that
$$\Theta_m(\Tate(q),P) = \frac{\sum_{n\in\Z} \zeta_m^{n^2}q_m^{n^2}} {
  \sum_{n\in \Z} q^{n^2}}.$$

We specify the embedding $\iota$ by sending $\zeta_4$ to $i$ and $\zeta_m$
to $e^{2\pi i k/m}$ for some integer $k$.  Thus
$$\phi_\tau(\iota(\Theta_m(\Tate(q),P))) = \Theta_m(E_\tau,P')$$ where
$P'$ is the unique point on $E_\tau$ such that $4P' = (k+\tau)/m$ and
$mP' = 1/4$.  One then notes that matrix $$\gamma =
\left(\begin{matrix} 1 & k \\ 0 & 1\end{matrix}\right)$$ has the
desired property.  That is, that the isomorphism 
\begin{eqnarray*}
  E_\tau & \stackrel{\sim}{\longrightarrow} & E_{\tau+k} \\
  z & \longmapsto & z
\end{eqnarray*}
sends $P'$ to the unique point on $E_{\tau+k}$ with $4P' = (k+\tau)/m
= \gamma\tau/m$
and $mP' = 1/4$.  Thus we are lead to $$\Theta_m(\tau+k) =
\frac{\theta(\frac{\tau+k}{m})} {\theta(\tau+k)} = \frac{\sum_{n\in\Z}
  e^{2\pi i n^2 \frac{\tau+k}{m}}} {\sum_{n\in\Z} e^{2\pi i
    n^2(\tau+k)}} = \frac{\sum_{n\in\Z} (e^{2\pi i k/m})^{n^2}(e^{2\pi
    i\tau/m})^{n^2} } { \sum_{n\in \Z} (e^{2\pi i\tau})^{n^2} }.$$
This implies that the $q$-expansion is as claimed, by the definitions
of $\iota$ and $\phi_\tau$.

Note in particular that this $q$-expansion with $\zeta_m=1$
corresponds to a $\Z[1/(4m)]$-rational cusp and has rational
coefficients.  By the $q$-expansion principle (in weight zero), we see
that $\Theta_m$ is in fact defined over $\Z[1/(4m)]$.  That is,
$\Theta_m$ extends to a rational function on $X_1(4m)_{\Z[1/(4m)]}$
with divisor given by (\ref{eqn:thetadiv}).

In the special case $m=l^2$ with $l$ and odd prime, we have two more
$q$-expansions.  The procedure used to derive these is exactly as
illustrated above.  We omit the details for brevity.  Recall that
$\Theta_{l^2}$ descends to $X_1(4,l^2)$ in this case.  Let
$\zeta_{l^2}$ be a primitive $(l^2)^{\small \rm th}$ root of unity.
Then we have
$$\Theta_{l^2}(\Tate(q),\zeta_4,\ip{\zeta_{l^2}}) =
l\frac{\sum_{n\in\Z} q^{l^2n^2}} {\sum_{n\in\Z} q^{n^2}}$$ and
$$\Theta_{l^2}(\Tate(q),\zeta_4,\ip{\zeta_{l^2}q_l}) =
\left(\frac{-1}{l}\right) \mathfrak{g}_l(\zeta_{l^2}^l)\frac{
  \sum_{n\in\Z} \zeta_{l^2}^{ln^2}q^{n^2}} {\sum_{n\in\Z} q^{n^2}}$$

Finally, we will need one more expansion in the special case $m=l$,
and odd prime.  Let $\zeta_l$ be a primitive $l^{\mathrm\small th}$
root of unity, and $P$ be the unique point of order $4l$ with
$4P=\zeta_l$ and $lP=\zeta_4$, then $$\Theta_l(\Tate(q),P) =
\mathfrak{g}_l(\zeta_l)\frac{ \sum_{n\in\Z} q^{ln^2}}{\sum_{n\in\Z}
  q^{n^2}}.$$

\section{Hecke Operators}\label{sec:hecke}

As explained in the Introduction, in order to construct Hecke
operators on the space  $M_{k/2}(4N,R)$, we look for rational
functions $\Theta$ on $X_1(4N,m)$ with the property that
$$\div(\Theta) = \pi_2^*\Sigma_{4N,k} - \pi_1^*\Sigma_{4N,k}.$$  
The following theorem is the manifestation in this formalism of the
classical fact that in half-integral weight, good Hecke operators
occur only for square indices.

\begin{theo}
  Let $m$ be a positive integer with $\gcd(4N,m)=1$, and let $R$ be a
  ring in which $4Nm$ is invertible. Then there exists a rational
  function $\Theta$ on $X_1(4N,m)_R$ with $$\div(\Theta) =
  \pi_2^*\Sigma_{4N,k} - \pi_1^*\Sigma_{4N,k}$$ if and only if $m$ is
  a square.  In case $m$ is a square, such a $\Theta$ is unique up to
  multiplication by a constant in $R^\times$.  
\end{theo}
\begin{proof}
  The uniqueness statement is clear.  Suppose that $\Theta$ has the
  specified divisor.  Then $d^*\Theta$ is a rational function on
  $X_1(4Nm)$ with divisor $$d^*(\pi_2^*\Sigma_{4N,k} -
  \pi_1^*\Sigma_{4N,k}) = (\pi_2\circ d)^*\Sigma_{4N,k} - (\pi_1\circ
  d)^*\Sigma_{4N,k}.$$
  By Proposition \ref{divtheta} and the
  compatibility of $\Sigma_{4N,k}$ with pullback through the standard
  map $$X_1(4Nm)\longrightarrow X_1(4m),$$
  this is the divisor of
  $\Theta_m^k$ on $X_1(4Nm)$.  Thus $\Theta$ and $\Theta_m^k$ agree up
  to a constant, and the ``only if'' statement follows from the fact
  that $\Theta_m^k$ descends through $d$ if and only if $m$ is a
  square (note that we are using that $k$ is odd here, as we must).
  The ``if'' statement follows simply because $\Theta_m^k$ furnishes a
  function on $X_1(4N,m)$ with the desired divisor when $m$ is a
  square.
\end{proof}

Let $l$ be a prime not dividing $4N$ and let $R$ be a ring in which
$4Nl$ is invertible.  We define $T_{l^2}$ to be the endomorphism of
$M_{k/2}(4N,R)$ defined by $$F \longmapsto
\frac{1}{l^2}\pi_{1*}(\pi_2^*F \cdot \Theta_{l^2}^k).$$
Note that this
endomorphism preserves the subspaces $M_{k/2}(4N,R,\chi)$ for each
character $\chi$.  The following result says that this definition
agrees with the classical one, as in (\cite{shimura}).
\begin{theo}
  Fix a primitive $(4N)^{\small\mathrm th}$ root of unity $\zeta_{4N}$
  and let $\infty$ denote the corresponding cusp.
  Let $F\in M_{k/2}(4N,R,\chi)$ have $q$-expansion $\sum a_nq^n$ at
  $\infty$.  Then the modular form $T_{l^2}F$ has $q$-expansion $\sum
  b_nq^n$ at $\infty$ where $$b_n = a_{l^2n} +
  \chi(l)\left(\frac{-1}{l}\right)^{\frac{k-1}{2}}
  l^{\frac{k-1}{2}-1}\left(\frac{n}{l}\right) a_n +
  \chi(l^2)l^{k-2}a_{n/l^2}.$$ 
\end{theo}
\begin{proof}
  We are given that $$F(\Tate(q),\zeta_{4N})
  \cdot \left(\sum_{n\in\Z}q^{n^2}\right)^k = \sum a_n q^n.$$
  The
  quantity $$\pi_{1*}(\pi_2^*F\cdot
  \Theta_{l^2}^k)(\Tate(q),\zeta_{4N})$$
  is a sum over the cyclic
  subgroups of order $l^2$ on $\Tate(q)$.  These subgroups are of
  three types, and we shall compute the corresponding contributions to
  the above sum one by one.  Choose a primitive root $\zeta_{l^2}$ as
  above.

  Firstly, there is the lone subgroup $\langle\zeta_{l^2}\rangle$.
  Passing to the quotient we find
  $$(\Tate(q)/\ip{\zeta_{l^2}},\zeta_{4N}/\ip{l^2}) \cong
  (\Tate(q^{l^2}), \zeta_{4N}^{l^2})$$ so that the contribution of
  this subgroup to the sum is
  \begin{eqnarray*}
    F(\Tate(q^{l^2}),\zeta_{4N}^{l^2}) 
    \Theta_{l^2}(\Tate(q),\zeta_{4N},\ip{\zeta_{l^2}})^k &=&
    F(\Tate(q^{l^2}), \zeta_{4N}^{l^2})\left(l\frac{\sum_{n\in\Z}
        q^{l^2n^2}}{\sum_{n\in\Z} q^{n^2}}\right)^k \\ && =
    \chi(l^2)l^k\frac{\sum 
      a_n q^{l^2n^2}}{(\sum_{n\in\Z}q^{n^2})^k}
  \end{eqnarray*}
  where we have used the $q$-expansion formulas for $\Theta_{l^2}$ form
  the previous section.
  
  Secondly, there are the subgroups $\ip{\zeta_{l^2}^iq_{l^2}}$ for
  $0\leq i\leq l^2-1$.  Note that
  $$(\Tate(q)/\ip{\zeta_{l^2}^iq_{l^2}},\zeta_{4N}/\ip{\zeta_{4N}^iq_{l^2}})
  \cong (\Tate(\zeta_{4N}^iq_{l^2}),\zeta_{4N}),$$ so that these
  subgroups collectively contribute 
  \begin{eqnarray*}
    \lefteqn{
    \sum_{i=0}^{l^2-1}F(\Tate(\zeta_{l^2}^iq_{l^2}),\zeta_{4N})
    \Theta_{l^2}(\Tate(q),\zeta_{4N},\ip{\zeta_{l^2}^iq_{l^2}})^k } && \\
    &=& \sum_{i=0}^{l^2-1}F(\Tate(\zeta_{l^2}^iq_{l^2}),\zeta_{4N})
    \left(\frac{\sum_{n\in\Z}\zeta_{l^2}^{in^2}q_{l^2}^{n^2}}
    {\sum_{n\in\Z} q^{n^2}}\right)^k \\ &=& \sum_{i=0}^{l^2-1}
    \frac{\sum a_n (\zeta_{l^2}^iq_{l^2})^n}{ (\sum_{n\in\Z}
    q^{n^2})^k} = l^2\frac{\sum a_{l^2n}q^n}{(\sum_{n\in\Z}
    q^{n^2})^k}.  
  \end{eqnarray*}

Lastly, there are the subgroups $\ip{\zeta_{l^2}^jq_{l}}$ for $1\leq
j\leq l-1$.  Note that
$$(\Tate(q)/\ip{\zeta_{l^2}^jq_l},\zeta_{4N}/\ip{\zeta_{l^2}^jq_{l}})
\cong (\Tate(\zeta_l^jq),\zeta_{4N}^l).$$  Thus the collective
contribution of these terms is 
\begin{eqnarray*}
  \lefteqn{
  \sum_{j=1}^{l-1}
  F(\Tate(\zeta_l^jq),\zeta_{4N}^l)
  \Theta_{l^2}(\Tate(q),\zeta_{4N},\ip{\zeta_{l^2}^jq_l})^k } && \\
  &=& \sum_{j=1}^{l-1} F(\Tate(\zeta_l^jq),\zeta_{4N}^l)\left(
  \left(\frac{-1}{l}\right)\mathfrak{g}_l(\zeta_l^j)
  \frac{\sum_{n\in\Z} \zeta_l^{jn^2}q^{n^2}}
  {\sum_{n\in\Z} q^{n^2}}\right)^k 
\end{eqnarray*}
Note that 
$$\mathfrak{g}_l(\zeta_l^j) =
\left(\frac{j}{l}\right) \mathfrak{g}_l(\zeta_l),$$
so that the above continues as 
\begin{eqnarray*}
\lefteqn{
  \chi(l)\left(\frac{-1}{l}\right)\left(\frac{\mathfrak{g}_l(\zeta_l)}
  {\sum_{n\in\Z}q^{n^2}}\right)^k 
  \sum_{j=1}^{l-1}
  \left(\frac{j}{l}\right)F(\Tate(\zeta_lq).\zeta_{4N}) \sum_{n\in\Z}(
  \zeta_l^jq)^{n^2} } && \\ &=&
  \chi(l)\left(\frac{-1}{l}\right)\left(\frac{\mathfrak{g}_l(\zeta_l)}
  {\sum_{n\in\Z}q^{n^2}}\right)^k
  \sum_{j=1}^{l-1} \left(\frac{j}{l}\right)\sum a_n (\zeta_l^jq)^n \\
  &=&
  \chi(l)\left(\frac{-1}{l}\right)\left(\frac{\mathfrak{g}_l(\zeta_l)}
  {\sum_{n\in\Z}q^{n^2}}\right)^k
  \sum_n a_n \left(\sum_{j=1}^{l-1} \left(\frac{j}{l}\right)
  \zeta_l^{jn}\right)q^n \\ &=&
  \chi(l)\left(\frac{-1}{l}\right)\frac{\mathfrak{g}_l(\zeta_l)^{k+1}}
  {\left(\sum_{n\in\Z}q^{n^2}\right)^k}\sum_n   
  \left(\frac{n}{l}\right) a_n q^n .
\end{eqnarray*}
It is well known that the Gauss sum above squares to
$$\mathfrak{g}_l(\zeta_l)^2 = \left(\frac{-1}{l}\right) l,$$
so we may
further continue the above as $$\chi(l)
\left(\frac{-1}{l}\right)^{\frac{k-1}{2}} \frac{
  l^{\frac{k+1}{2}}}{\left(\sum_{n\in\Z} q^{n^2}\right)^k} \sum_n
\left(\frac{n}{l}\right) a_nq^n.$$

Adding these three expressions together and dividing by $l^2$ we arrive
at the desired $q$-expansion for $T_{l^2}F$.
\end{proof}

Suppose now that $l$ is an odd prime dividing $N$.  Consider the map
$e$ defined as follows.
\begin{eqnarray*}
  e:  X_1(4N,l) & \longrightarrow & X_1(4l) \\
  (E,P,C) & \longmapsto & (E/C,(N/l)P/C).
\end{eqnarray*}
By chasing degeneracy maps around, one verifies that
$$e^*(\div(\Theta_l^{-1})) = \pi_2^*\Sigma_{4N,1} -
\pi_1^*\Sigma_{4N,1},$$ so that we get a well-defined endomorphism of
$M_{k/2}(4N,R)$, namely, $$U_l(F) = \frac{1}{l} \pi_{1*}(\pi_2^*F
\cdot e^*\Theta_l^{-k})$$

\begin{theo}
  Let $l$ be an odd prime dividing $N$.  Fix a primitive
  $(4N)^{\mathrm\small th}$ root of unity $\zeta_{4N}$ and let
  $\infty$ denote the corresponding cusp.  The endomorphism $U_l$ of
  $M_{k/2}(4N,R)$ has the effect $$\sum a_nq^n \longmapsto
  \mathfrak{g}_l(\zeta_{4N}^{4N/l})\sum a_{ln}q^n$$ on $q$-expansions
  at $\infty$.
\end{theo}
\begin{proof}
  Note that the collection of subgroups of $\Tate(q)$ of order $l$ not
  meeting the subgroup generated by $\zeta_{4N}$ is
  $\ip{\zeta_l^iq_l}$, for $0\leq i<l-1$.  We compute
  \begin{eqnarray*}
    \lefteqn{ \pi_{1*}(\pi_2^*F\cdot
      e^*\Theta_l^{-k})(\Tate(q),\zeta_{4N})} && \\  &=& 
    \sum_{i=0}^{l-1} (\pi_2^*F\cdot
    e^*\Theta_l^{-k})(\Tate(q),\zeta_{4N},\ip{\zeta_l^iq_l})  \\ &=& 
  \sum_{i=0}^{l-1}
  F(\Tate(q)/\ip{\zeta_l^iq_l},\zeta_{4N}/\ip{\zeta_l^iq_l})
  \Theta(\Tate(q)/\ip{\zeta_l^iq_l},\zeta_{4N}^{N/l}/\ip{\zeta_l^iq_l})^{-k}
      \\ &=& \sum_{i=0}^{l-1} F(\Tate(\zeta_l^iq_l),\zeta_{4N})
      \Theta(\Tate(\zeta_l^iq_l),\zeta_{4N}^{N/l})^{-k} \\ &=&
      \sum_{i=0}^{l-1}
      F(\Tate(\zeta_l^iq_l),\zeta_{4N})\left(\mathfrak{g}_l
      (\zeta_{4N}^{4N/l})\frac{\sum_{n\in\Z} q^{n^2}}{\sum_{n\in\Z}
      (\zeta_l^iq_l)^{n^2}}\right)^{-k} \\&=&
      \mathfrak{g}_l(\zeta_{4N}^{4N/l})^{-k} \sum_{i=0}^{l-1}
      \frac{\sum a_n (\zeta_l^iq_l)^n} {(\sum_{n\in\Z} q^{n^2})^k} =
      l\frac{\mathfrak{g}_l(\zeta_{4N}^{4N/l})}{ (\sum_{n\in\Z}
      q^{n^2})^k} \sum a_{ln}q^n
\end{eqnarray*}
dividing by $l$ we get the desired result.
\end{proof}

Note in particular that the effect of the map $U_l$ on $q$-expansions
depends on which $\infty$ is chosen, whereas that of its square
$(U_l)^2$ does not (since the square of the Gauss sum is independent
of the root of unity used).  On a related note, $U_l$ (unlike
$(U_l)^2$ or $T_{l^2}$) does not preserve the subspaces
$M_{k/2}(4N,R,\chi)$ but rather induces maps $$U_l: M_{k/2}(4N,R,\chi)
\longrightarrow M_{k/2}(4N,R,\chi\cdot(l/\cdot))$$ where $(l/\cdot)$
is the usual Legendre character.

\begin{rema}
  One can also construct operators $U_4$ and (if $N$ is even), $U_2$.
  Predictably, the operator $U_2$ multiplies the Nebentypus by
  $(2/\cdot)$.  One can also show that Kohnen's $+$-space makes good
  sense in this more general setting.  That is, one can construct a
  natural operator (closely related to $U_4$) on the space of forms, of
  which Kohnen's $+$-space is an eigenspace.  The details of these
  constructions are in the author's thesis (\cite {thesis}).
\end{rema}

\section{$p$-adic modular forms of half-integral weight}\label{sec:padic}

In this section we fix a rational prime $p\geq 5$. Let $K$ be a
complete subfield of $\C_p$ and let $r\in\Q\cap[0,1]$.  In this
section we will define the space of $M_{k/2}(4N,K,r)$ of
$r$-overconvergent $p$-adic modular forms of weight $k/2$ and level
$4N$, as well as a number of natural operators on this space.  For the
sake of simplicity of exposition, we restrict ourselves to the case
$p\nmid N$. 

By our choice of $p$, the weight $p-1$ and level $1$ Eisenstein series
$E_{p-1}$ furnishes a lifting the Hasse invariant to characteristic
zero.  It is well-known that $E_{p-1}$ has Fourier coefficients in
$\Q\cap\Z_p$ and therefore furnishes a section of $\omega^{p-1}$ on
$X_1(N)_{\Z_p}$ for any $N\geq 4$. Note that this makes sense even in
case $N=4$ because the bundle $\omega^2$ \emph{does} descend from the
moduli stack for the $\Gamma_1(4)$ problem to the coarse moduli scheme
$X_1(4)$ (even though $\omega$ \emph{does not}).

For $N\geq 4$, let $X_1(4N)^\an_K$ denote the rigid analytic space
over $\Q_p$ associated to the curve $X_1(4N)_K$.  For any $r$ as
above, we may also consider the region in this curve defined by the
inequality $|E_{p-1}|\geq r$.  This is known to be a
connected affinoid subdomain of $X_1(4N)_K^\an$, and will be denoted
by $X_1(4N)_{\geq r}^\an$.  Thus, in particular, $X_1(4N)_{\geq
  1}^\an$ is the ``ordinary locus'' in $X_1(4N)_K^\an$.  Similarly,
one may consider $X_1(4N,m)_{\geq r}^\an$.  This space is connected
exactly when $\gcd(m,p)=1$ (recall that we are assuming that $p\nmid
4N$).  For these facts and others which we will freely use in the
sequel, see \cite{coleman} and \cite{buzzard}.

\begin{defi}
  The space of $p$-adic modular forms of weight $k/2$, level $4N$
  ($p\nmid N$), and growth condition $r$ over $K$ is $$M_{k/2}(4N,K,r)
  = H^0(X_1(4N)_{\geq r}^\an,\Sigma_{4N,k}).$$
\end{defi}

Each of the spaces $M_{k/2}(4N,K,r)$ is a (generally
infinite-dimensional) Banach space over $K$.  As usual, forms in the
spaces above with $r<1$ are said to be \emph{overconvergent}.  The
space of all overconvergent forms is accordingly the inductive limit
$$M_{k/2}^\dagger(4N,K) = \lim_{r\to 1} M_{k/2}(4N,K,r).$$

The curve $\Tate(q)$ can also be thought of as a $p$-adic analytic
family of elliptic curves over the punctured unit disk $0<|q|<1$.  As
usual, in the presence of level $4N$ structure, we instead consider a
parameter $q_{4N}$ and define $q=q_{4N}^{4N}$. Then we may consider
the family $\Tate(q)$ over the punctured disk $0<|q_{4N}|<1$, so that
all of the $4N$-torsion is rational after adjoining the $4N^{\small\rm
  th}$ roots of unity.  

If $F\in M_{k/2}(4N,K,r)$ and $P$ is a point of order $4N$ on
$\Tate(q)$ then we may evaluate $F$ on the pair $(\Tate(q),P)$ to
obtain a meromorphic function on the punctured disk $0<|q_{4N}|<1$
which has a Laurent expansion in $\overline{K}((q_{4N}))$.  Just as in
the algebraic case, we can define the $q$-expansion of $F$ at this
pair $(\Tate(q),P)$ be adjusting this expansion by the corresponding
expansion of $\theta$ from Table \ref{tab:table3}.  Thus we find, just
as in the algebraic case, that a meromorphic function on
$X_1(4N)_{\geq r}^\an$ is in $M_{k/2}(4N,K,r)$ if and only if it is
analytic away from the cusps and all of these $q$-expansions are in
$\overline{K}[[q_{4N}]]$.

\section{$p$-adic Hecke operators}

Our aim is to define a Hecke action on the spaces $M_{k/2}(4N,K,r)$.
The situation is rather different depending on whether or not we are
dealing with the Hecke operator ``at $p$'', but in all cases we use a
variant of the ``pull-back, multiply by a unit, and push forward''
construction above relative to the usual correspondence $$\pi_1,\pi_2
: X_1(4N,m)_K^\an \rightrightarrows X_1(4N)_K^\an.$$

Suppose $Z_1$ and $Z_2$ are admissible opens in $X_1(4N)_K^\an$ and
$W$ is an admissible open in $X_1(4N,m)_K^\an$ such that the $\pi_i$
restrict to a pair of maps
$$\xymatrix{ & W\ar[dl]_{\pi_1}\ar[dr]^{\pi_2} \\ Z_1 & & Z_2}$$
Suppose further that $\pi_1^{-1}(Z_1)\subseteq \pi_2^{-1}(Z_2)$ and
$\Theta$ is a meromorphic function on $W$ with $$\div(\Theta)\geq
\pi_2^*\Sigma_{4N,k} - \pi_1^*\Sigma_{4N,k}.$$
These data furnish us
with a map $$H^0(Z_2,\Sigma_{4N,k})\longrightarrow
H^0(Z_1,\Sigma_{4N,k})$$
defined as the composition
$$\xymatrix{ H^0(Z_2,\Sigma_{4N,k})\ar[r]^-{\pi_2^*} &
  H^0(\pi_2^{-1}(Z_2),\pi_2^*\Sigma_{4N,k})\ar[r]^{\cdot\Theta} &
  H^0(\pi_2^{-1}(Z_2),\pi_1^*\Sigma_{4N,k}) \ar `r[d] `[l] `[dll]
  `[dl] [dl] 
\\ &
  H^0(\pi_1^{-1}(Z_1),\pi_1^*\Sigma_{4N,k})\ar[r]^-{\pi_{1*}} &
  H^0(Z_1,\Sigma_{4N,k})}$$ 
where the long arrow is restriction.

This setup suffices for the case $m=l^2$ with $\gcd(l,4Np)=1$.  Then we
may take $Z_1=Z_2=X_1(4N)_{\geq r}^\an$, $W=X_1(4N,l^2)_{\geq r}^\an$,
and $\Theta = \Theta_{l^2}^k$.  The two conditions that must be
satisfied, namely that the $\pi_i$ restrict as above and that
$\pi_1(Z_1)\subseteq \pi_2^{-1}(Z_2)$, together are equivalent to the
assertion that if $C$ is a cyclic subgroup of $E$ of order $l^2$, then
$|E_{p-1}(E)|\geq r$ if and only if $|E_{p-1}(E/C)|\geq r$.  Of
course, it suffices to verify the ``only if'' part since we may then
apply the result to the pair $(E/C,E[l^2]/C)$ to verify the other
implication.  At any rate, the result is well known and holds for
cyclic subgroups of all orders prime to $p$.

\begin{theo}
  There exists a continuous endomorphism $T_{l^2}$ of
  $M_{k/2}(4N,K,\chi,r)$ having the effect $$\sum a_nq^n\longmapsto
  \sum b_nq^n$$
  where $$b_n = a_{l^2n} +
  \chi(l)\left(\frac{-1}{l}\right)^{\frac{k-1}{2}}
  l^{\frac{k-1}{2}-1}\left(\frac{n}{l}\right) a_n +
  \chi(l^2)l^{k-2}a_{n/l^2}$$ on $q$-expansions at (any choice of)
  $\infty$.  
\end{theo}
\begin{proof}
  Dividing the endomorphism of $M_{k/2}(4N,K,r)$ obtained from the
  above choice of $\Theta$ by $l^2$ we arrive at a new endomorphism
  denoted $T_{l^2}$.  The effect of $T_{l^2}$ on $q$-expansions at
  $\infty$ is exactly as in the algebraic case, as the same
  computation verifies.
  
  We claim that $\|T_{l^2}\|\leq 1$, so that, in particular, $T_{l^2}$
  is continuous.  That the norms of $\pi_2^*$ and $\pi_{1*}$ are at
  most one are generalities.  It therefore suffices to show that
  multiplication by $\Theta_{l^2}^k$ as a map
  $$H^0(\pi_2^{-1}(Z_2),\pi_2^*\Sigma_{4N,k}) \longrightarrow
  H^0(\pi_2^{-1}(Z_2), \pi_1^*\Sigma_{4N,k})$$ is bounded by $1$.

  This follows immediately from the fact that $\Theta_{l^2}^k$ is an
  \emph{integral} section of the sheaf
  $$\OO_{X_1(4N,l^2)}(\pi_2^*\Sigma_{4N,k} - \pi_1^*\Sigma_{4N,k})$$
  on
  the algebraic curve $X_1(4N,l^2)_K$ (i.e., that it extends to a
  section over all of $X_1(4N,l^2)_{\OO_K}$) as the $q$-expansion
  principle readily demonstrates.
\end{proof}

The more interesting case to consider is that of $m=p^2$.  If
$r>p^{-1/p(1+p)}$, then the existence of the canonical subgroup of
order $p^2$ (see \cite{katz},\cite{buzzard}) furnishes a map
$$X_1(4N)_{\geq r}^\an \longrightarrow X_1(4N,p^2)_{\geq r}^\an$$
which is an isomorphism onto the connected component of right-hand
space containing the cusp associated to
$(\Tate(q),\zeta_{4N},\mu_{p^2})$ (for any primitive choice of
$\zeta_{4N}$), the inverse being $\pi_1$.  We will denote this
component by $W_{\geq r}$.  Standard results on quotienting by
canonical subgroups show that, for $r$ as above, $\pi_2$ restricts to
a map $$\pi_2: W_{\geq r}\longrightarrow X_1(4N)_{\geq r^{p^2}}^\an.$$

We will denote the restrictions of the various modular units and
divisors on $X_1(4N,p^2)^\an_K$ to $W_{\geq r}$ by their usual names.
Then we have, as usual, that $$\div(\Theta_{p^2}^{-k}) =
\pi_1^*\Sigma_{4N,k} - \pi_2^*\Sigma_{4N,k}.$$
Thus we have a
well-defined map
\begin{eqnarray*}
  M_{k/2}(4N,K,r) & \longrightarrow & M_{k/2}(4N,K,r^{p^2})\\
  F & \longmapsto & \ip{p^2}^*\pi_{2*}(\pi_1^*F\cdot (p\Theta_{p^2})^{-k})
\end{eqnarray*}

Let us determine the effect of this map on $q$-expansions.  To compute
the trace $\pi_{2*}$ we note that the fiber of $\pi_2$ above the pair
$(E,P)$ consists of all (isomorphism classes of) triple $(E',P',C)$
such that $C$ is the canonical subgroup of $E'$ of order $p^2$, and
$(E'/C,P'/C) \cong (E,P)$.  This is the same as the set of triples
$(E/C',mP/C',E[p^2]/C')$ where $C'$ runs through all order $p^2$ cyclic
subgroups of $E$ which have trivial intersection with \emph{its}
canonical subgroup of order $p^2$ (i.e. that do not contain the order
$p$ canonical subgroup of $E$), and $m$ is an inverse of $p^2$ mod
$4N$.  
So we may compute
\begin{eqnarray*}
\lefteqn{  (\ip{p^2}^*\pi_{2*}(\pi_1^*\ip{p^2}^*F\cdot
   (p\Theta_{p^2})^{-k}))(\Tate(q),\zeta_{4N}) 
   }&& \\ &=& 
  \sum_{i=0}^{p^2-1}
   (\pi_1^*F\cdot(p\Theta_{p^2})^{-k})(\Tate(q)/\langle 
  \zeta_{p^2}^iq_{p^2}\rangle,\zeta_{4N}/\langle\zeta_{p^2}^iq_{p^2}\rangle, 
  \Tate(q)[p^2]/\langle \zeta_{p^2}^iq_{p^2}\rangle) \\ &=&
   \sum_{i=0}^{p^2-1}
   F(\Tate(\zeta_{p^2}^iq_{p^2}),\zeta_{4N})
   (p\Theta_{p^2}(\Tate(\zeta_{p^2}^iq_{p^2}),\zeta_{4N},\mu_{p^2}))^{-k}
   \\ &=& \sum_{i=0}^{p^2-1}
   F(\Tate(\zeta_{p^2}^iq_{p^2}),\zeta_{4N})\left(
   \frac {\sum_{n\in\Z}  q^{n^2}} {\sum_{n\in\Z}
   (\zeta_{p^2}^iq_{p^2})^{n^2}} \right)^{-k} \\ &=& p^2\frac{\sum
   a_{p^2n}q^n} { (\sum_{n\in\Z} q^{n^2})^k}, 
\end{eqnarray*}
where $\sum a_nq^n$ is the $q$-expansion of $F$ at
$(\Tate(q),\zeta_{4N})$.  

\begin{theo}
  Suppose that  $p^{-1/p(1+p)}<r$.  Then there exists a continuous
  endomorphism $U_{p^2}$ of $M_{k/2}(4N,K,r)$ of norm at most $p^2$
  having the effect
  $$\sum a_nq^n\longmapsto a_{p^n}q^n$$
  on $q$-expansions at (any
  choice of) $\infty$.  Moreover, in case $r<1$, $U_{p^2}$ is in fact
  completely continuous.
\end{theo}
\begin{proof}
  The endomorphism we seek is the map 
  $$M_{k/2}(4N,K,r)\longrightarrow M_{k/2}(4N,K,r^{p^2})$$
  defined
  above divided by $p^2$ and post composed with the natural inclusion
  $$M_{k/2}(4N,K,r^{p^2}) \subseteq M_{k/2}(4N,K,r).$$
  As this latter
  map is completely continuous if $r<1$, it suffices to show that the
  map
  $$M_{k/2}(4N,K,r) \longrightarrow M_{k/2}(4N,K,r^{p^2})$$
  is
  continuous.  As in the case for $T_{l^2}$ above, it suffices to see
  that multiplication by the unit $(p\Theta_{p^2})^{-k}$ is bounded as
  a map $$H^0(W_{\geq r},\pi_1^*\Sigma_{4N,k}) \longrightarrow
  H^0(W_{\geq r},\pi_2^*\Sigma_{4N,k}).$$
  This is more subtle than the
  case of $T_{l^2}$ because of the bad reduction of $X_1(4N,p^2)$.
  
  Consider the modular unit $(p\Theta_{p^2})^{-1}$ on the algebraic
  curve $X_1(4N,p^2)_K$, and let $X_1(4N,p^2)_{\OO_K}$ denote the
  Katz-Mazur model of this curve over the ring of integers $\OO_K$ in
  $K$.  We claim that $(p\Theta_{p^2})^{-1}$ extends to all of
  $X_1(4N,p^2)_{\OO_K}$ as a section of the sheaf
  $$\OO_{X_1(4N,p^2)_{\OO_K}}(\pi_1^*\Sigma_{4N,1} -
  \pi_2^*\Sigma_{4N,1}).$$
  Since $X_1(4N,p^2)_{\OO_K}$ is Cohen-Macaulay
  over $\OO_K$ it suffices by the $q$-expansion principle to show that
  $(p\Theta_{p^2})^{-1}$ has integral $q$-expansions at a collection
  of cusps which, together, meet all three components of the special
  fiber of $X_1(4N,p^2)$ in characteristic $p$.  In particular, we fix
  primitive roots $\zeta_{4N}$ and $\zeta_{p^2}$ and take the three
  cusps corresponding to $(\Tate(q),\zeta_{4N},\langle \zeta_{p^2}\rangle)$,
  $(\Tate(q),\zeta_{4N},\langle \zeta_{p^2}q_p\rangle)$, and
  $(\Tate(q),\zeta_{4N}, \langle q_{p^2}\rangle)$.  That the indicated
  $q$-expansions are integral follows readily from the results of
  Section \ref{sec:thetam}.  

  It follows that multiplication by $(p\Theta_{p^2})^{-k}$ is of norm
  at most $1$, and therefore continuous.  Taking into account the
  division by $p^2$, we see that $\|U_{p^2}\|\leq p^2$.
\end{proof}

\bibliographystyle{smfplain}

\end{document}